\title{On the representation of an even perfect number as the sum of a limited number of cubes}
\author{\sc Bakir FARHI \\
Department of Mathematics \\
University of B\'ejaia \\
Algeria \\[1mm]
\href{mailto:bakir.farhi@gmail.com}{bakir.farhi@gmail.com} \\[1mm]
\url{http://www.bakir-farhi.net}
}
\date{}
\newtheorem{thm}{Theorem}
\newtheorem{conj}[thm]{Conjecture}
\newtheorem*{thmn}{Theorem}
\def\N{\mathbb{N}}
\begin{document}
\maketitle
\begin{abstract}
\noindent The aim of this note is to show that any even perfect number, other than $6$, can be written as the sum of 5 cubes of natural numbers. We also conjecture that any even perfect number, other than $6$, can be written as the sum of only 3 cubes of natural numbers.
\end{abstract}
\noindent\textbf{MSC:} 11A25, 11B13. \\
\textbf{Keywords:} Perfect numbers, Sum of cubes.
\section{Introduction}

A perfect number is a positive integer that is equal to the sum of its proper positive divisors. The sequence of perfect numbers begins with the following numbers:
$$
6 ~;~ 28 ~;~ 496 ~;~ 8128 ~;~ 33550336 ~;~ 8589869056.
$$
There are more than twenty five centuries that mathematicians discovered the perfect numbers and began to be interested in their study. The first study of these numbers is probably due to the philosopher-mathematicians of the Pythagorean school (about 500 B.C). Eucild (about 300 B.C) showed that if $(2^n - 1)$ is prime then the number $2^{n - 1} (2^n - 1)$ is perfect. Around 100 AD, the Neopythagorean Nicomachus of Gerasa gave in his \emph{Introduction to Arithmetic} a classification of the positive integers based on the concept of perfect numbers; there are three classes: deficient numbers, perfect numbers and abundant numbers (according as the sum of the all proper positive divisors of a given positive integer $n$ is lower than, equal or greater than $n$). Nicomachus only listed the first four perfect numbers. Certainly, those are the only known perfect numbers by the Greeks.

After the Greek mathematicians, it was the Arabic mathematicians who were interested in the study of the perfect numbers and among them, we can mention Thabit Ibn Qurra, Al-Baghdadi, Ibn Al-Haytham, Al-Antaki, Ibn Al-Banna and many others. It is not possible to list the all Arabic works on this area but we just note that Ibn Al-Haytham announced and attempted to prove the converse of Euclid's theorem cited above for the even perfect numbers (see, e.g., \cite{ras}).

In the 17\textsuperscript{th} century, the study of the perfect numbers has been pursued by several mathematicians, among which Descarte, Frenicle, Fermat, Mersenne, Wolff and many others (see, e.g., \cite[Chap 1]{dic}). However, significant results was only obtained in the 18\textsuperscript{th} century by Euler who proved in 1747 the converse of Euclid's theorem for the even perfect numbers (see, e.g., \cite[Chap IV]{sie}). So, we have the following:
\begin{thmn}[Euclid-Euler]
An even positive integer $N$ is a perfect number if and only if it can be written as:
$$
N = 2^{p - 1} \left(2^p - 1\right) ,
$$
where $p$ and $(2^p - 1)$ are both primes.
\end{thmn}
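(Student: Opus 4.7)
The plan is to establish the equivalence by treating each direction separately, with the sum-of-divisors function $\sigma(n) = \sum_{d \mid n} d$ as the central tool. I would first record the two standard facts that $N$ is perfect if and only if $\sigma(N) = 2N$, and that $\sigma$ is multiplicative on coprime arguments; the whole argument then takes place within this framework.

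For the \emph{Euclid} direction, I would take $p$ and $2^p - 1$ to be prime and set $N = 2^{p-1}(2^p - 1)$. Since $\gcd(2^{p-1},\, 2^p - 1) = 1$, multiplicativity gives $\sigma(N) = \sigma(2^{p-1})\,\sigma(2^p - 1) = (2^p - 1)\cdot 2^p = 2N$, so $N$ is perfect. This direction should amount to a single short computation.

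The \emph{Euler} converse is the substantive part. I would let $N$ be even and perfect and write $N = 2^k m$ with $m$ odd and $k \geq 1$. From $\sigma(N) = 2N$ and multiplicativity one obtains $(2^{k+1} - 1)\sigma(m) = 2^{k+1} m$; since $2^{k+1} - 1$ is odd, $2^{k+1}$ must divide $\sigma(m)$, and writing $\sigma(m) = 2^{k+1} s$ yields $m = (2^{k+1} - 1)\, s$. The key step, which I expect to be the main subtlety, is the observation that $m$ and $s$ are two \emph{distinct} divisors of $m$ (distinct because $2^{k+1} - 1 > 1$ forces $s < m$). Consequently $\sigma(m) \geq m + s = 2^{k+1} s = \sigma(m)$, and the forced equality means that $m$ has exactly these two divisors; hence $s = 1$ and $m = 2^{k+1} - 1$ is prime.

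Setting $p := k + 1$, this gives $N = 2^{p-1}(2^p - 1)$ with $2^p - 1$ prime. The final loose end is to verify that $p$ itself must be prime: if one had $p = ab$ with $1 < a < p$, then $2^a - 1$ would be a nontrivial divisor of $2^p - 1$ via the standard geometric-sum identity, contradicting its primality. Combining the two directions yields the stated biconditional.
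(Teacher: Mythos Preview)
Your argument is correct and is the standard textbook proof of the Euclid--Euler theorem. However, there is nothing to compare it against: the paper does \emph{not} supply its own proof of this statement. The theorem is quoted as a classical result, with the Euler direction attributed to his 1747 work and a reference given to Sierpi\'nski's \emph{Elementary theory of numbers} (Chap.~IV); the paper then uses the theorem as a black box in the proof of its main result. So your proposal is not so much an alternative to the paper's proof as a self-contained justification of a result the paper takes for granted.
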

Concerning the odd perfect numbers, even if they were studied by several mathematicians, we don't know up to now if there is anyone. The conjecture of odd perfect numbers states that such numbers do not exist (see, e.g., \cite{guy,san}). This conjecture is probably the oldest open problem in Number Theory and it is the target of many current researchers in this area, who follow the path lead before them by Euler, Peirce, Servais, Sylvester and many others.

Further, using the form of an even perfect number given by Euler's theorem, it is easy to show the following well-known result due to the historian of Mathematics T. L. Heath:
\begin{thmn}[T. L. Heath]
Any even perfect number, other than $6$, can be written as the sum of consecutive odd cubes of natural numbers beginning with $1$.
\end{thmn}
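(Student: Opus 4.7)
The plan is to combine the Euclid-Euler characterization with the classical closed-form for sums of consecutive odd cubes, and then verify that a certain natural number $k$ makes everything match.

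First I would recall (or prove by a straightforward induction on $k$) the identity
\[
1^3 + 3^3 + 5^3 + \cdots + (2k-1)^3 \;=\; k^2\bigl(2k^2 - 1\bigr) \qquad (k \in \N^*).
\]
This is the engine of the argument: it rewrites the quantity we want to hit as a product of an even square and an odd factor, which is precisely the shape of an even perfect number.

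Next, by the Euclid-Euler theorem, any even perfect number $N$ has the form $N = 2^{p-1}(2^p-1)$ with $p$ and $2^p - 1$ prime. The hypothesis $N \neq 6$ rules out $p = 2$, so $p$ is an odd prime, hence $p - 1$ is even and
\[
k \;:=\; 2^{(p-1)/2}
\]
is a bona fide natural number. Substituting this $k$ into the identity above gives
\[
\sum_{j=1}^{k}(2j-1)^3 \;=\; k^2\bigl(2k^2 - 1\bigr) \;=\; 2^{p-1}\bigl(2\cdot 2^{p-1} - 1\bigr) \;=\; 2^{p-1}\bigl(2^p - 1\bigr) \;=\; N,
\]
which is exactly the desired representation.

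There is essentially no obstacle here beyond spotting the right value of $k$; the primality of $2^p - 1$ is not even used, only the parity of $p$ (i.e.\ $p \geq 3$), which is what the exclusion of $N = 6$ buys us. I would present the proof in the order: identity for sums of odd cubes, Euclid-Euler form of $N$, choice $k = 2^{(p-1)/2}$, verification.
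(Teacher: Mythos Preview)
Your proof is correct and follows exactly the approach indicated in the paper: the paper does not give a full proof but simply notes that the result rests on the identity $1^3 + 3^3 + \dots + (2n-1)^3 = n^2(2n^2 - 1)$, which is precisely your engine. Your choice $k = 2^{(p-1)/2}$ and the verification $k^2(2k^2-1) = 2^{p-1}(2^p-1) = N$ is the intended (and only natural) way to complete the argument.
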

We just note that the proof of this theorem is based on the identity:
$$
1^3 + 3^3 + 5^3 + \dots + (2 n - 1)^3 = n^2 \left(2 n^2 - 1\right) ,
$$
which is valid for any positive integer $n$ and which we can easily verify by induction.

Looking at the above theorem, it is naturally to ask about the smallest positive integer $r$ for which any even perfect number can be written as the sum of $r$ cubes of natural numbers. In the next section, we give the result we have obtained in this direction.

\section{The result}
\begin{thm}
Any even perfect number, other than $6$, can be written as the sum of 5 cubes of natural numbers.
\end{thm}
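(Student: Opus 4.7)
My approach starts from the Euclid--Euler theorem, which asserts $N = 2^{p-1}(2^p - 1)$ for some odd prime $p \ge 3$ (the excluded case $p = 2$ corresponds to $N = 6$). The case $p = 3$ is handled directly by $N = 28 = 3^3 + 1^3 + 0^3 + 0^3 + 0^3$.

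For $p \ge 5$, I split the argument into two cases according to $p \bmod 3$. When $p \equiv 1 \pmod{3}$, setting $c := 2^{(p-1)/3}$, one has $2^{p-1} = c^3$ and
\[
  N \;=\; c^3 \cdot (2 c^3 - 1).
\]
When $p \equiv 2 \pmod{3}$, setting $d := 2^{(p-2)/3}$ and $e := 2^{(p+1)/3}$, one checks that $2^{p-1} = 2 d^3$ and $(de)^3 = 2^{2p-1}$, giving
\[
  N \;=\; (de)^3 - 2 d^3 \;=\; d^3 \cdot (e^3 - 2).
\]
In both cases, since multiplying a sum of cubes by a cube preserves the number of cubes, the task reduces to writing the smaller integer $2 c^3 - 1 = 2^p - 1$ (Case A) or $e^3 - 2$ (Case B) as a sum of at most $5$ non-negative cubes.

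To carry out this final step I would exploit the elementary identity $X^3 - 1 = (X-1)^3 + 3 X(X-1)$. In Case A it yields $2 c^3 - 1 = c^3 + (c-1)^3 + 3 c(c-1)$, so it suffices to decompose $3 c(c-1)$ as a sum of at most $3$ non-negative cubes. In Case B it yields $e^3 - 2 = (e-1)^3 + 3 e(e-1) - 1$, so it suffices to decompose $3 e(e-1) - 1$ as a sum of at most $4$ non-negative cubes. The main obstacle lies precisely in this last decomposition: while it works smoothly in many small instances (for example $3 \cdot 4 \cdot 3 = 3^3 + 2^3 + 1^3$, giving the clean representation $2^7 - 1 = 4^3 + 3^3 + 3^3 + 2^3 + 1^3$ used in the $p = 7$ case), the quantity $3 X(X-1)$ is not always a sum of three non-negative cubes---already $720 = 3 \cdot 16 \cdot 15$ fails---so the exceptional values of $c$ or $e$ will require an alternative direct decomposition of $2^p - 1$ or $e^3 - 2$, presumably via a secondary case analysis based on a finer residue condition on $p$ (modulo $9$, say) or on further $2$-adic information.
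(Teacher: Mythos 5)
Your reduction is sound as far as it goes: factoring out the largest cube power of $2$, i.e.\ writing $N = c^3(2c^3-1)$ when $p \equiv 1 \pmod 3$ and $N = d^3(e^3-2)$ when $p \equiv 2 \pmod 3$, is exactly the paper's starting framework (the paper phrases it as $p = 6k+1$ or $p = 6k+5$, which is the same split since $p$ is odd). But the proof then has a genuine gap at precisely the point you flag: the identity $X^3 - 1 = (X-1)^3 + 3X(X-1)$ reduces Case A to expressing $3c(c-1)$ as a sum of at most three non-negative cubes, and this already fails for the second value you must treat, $p = 13$, where $c = 16$ and $3 \cdot 16 \cdot 15 = 720$ is not a sum of three non-negative cubes. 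You acknowledge this and defer the exceptional values to an unspecified ``secondary case analysis,'' but no such analysis is given, and it is not clear one exists along these lines; so the argument as written does not prove the theorem for infinitely many $p$ (indeed it is not established for any infinite family).

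The missing idea is a different cube identity that exploits the fact that $c$ and (essentially) $e$ are not just cubes of powers of $2$ but \emph{sixth} powers up to small factors. Since $p$ is odd, $p \equiv 1 \pmod 3$ forces $p = 6k+1$, so $c = 2^{2k} = n^2$ with $n = 2^k$, and the identity
$$
2n^6 - 2 \;=\; \left(n^2+n-1\right)^3 + \left(n^2-n-1\right)^3
$$
gives at once $2c^3 - 1 = 2n^6 - 1 = (n^2+n-1)^3 + (n^2-n-1)^3 + 1^3$, hence $N$ is a sum of three cubes in this case. In your Case B ($p = 6k+5$), the quantity $e^3 - 2 = 2^{6k+6} - 2 = 64\cdot 2^{6k} - 2$ is handled in the paper by writing $64 = 3^3 + 3^3 + 2^3 + 2$, so that $e^3 - 2 = (3\cdot 2^{2k})^3 + (3\cdot 2^{2k})^3 + (2\cdot 2^{2k})^3 + (2\cdot 2^{6k} - 2)$, and the leftover $2\cdot 2^{6k} - 2 = 2n^6 - 2$ (again $n = 2^k$) is two more cubes by the same identity, for a total of five. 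If you replace your $X^3-1$ step by this identity, your reduction closes into a complete proof; without it (or some substitute valid for \emph{all} admissible $c$ and $e$), the argument remains incomplete.
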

\begin{proof}
The proof is based on the following identity:
\begin{equation}\label{eq7}
2 n^6 - 2 ~=~ \left(n^2 + n - 1\right)^3 + \left(n^2 - n - 1\right)^3 
\end{equation}
which holds for any natural number $n$. \\
Now, let $N$ be an even perfect number greater than $6$. By Euler's theorem, $N$ can be written as $N = 2^{p - 1} (2^p - 1)$, where $p$ and $(2^p - 1)$ are both prime numbers. Because $N > 6$, we have $p > 2$. For $p = 3$, we get $N = 28 = 1^3 + 3^3$, which is a sum of two cubes of natural numbers and so is also a sum of 5 cubes of natural numbers (by completing the sum by $0$'s). For the following, assume that $p > 3$. So $p$ has one of the two forms: $p = 6 k + 1$ or $p = 6 k + 5$ ($k \in \N$). \\[1mm]
\underline{1\textsuperscript{st} case:} (if $p = 6 k + 1$ for some $k \in \N$) \\[1mm]
In this case, we have $N = 2^{p - 1} (2^p - 1) = 2^{6 k} (2^{6 k + 1} - 1)$. Taking $n = 2^k$ in \eqref{eq7}, we get $2^{6 k + 1} - 2 = a^3 + b^3$, with $a = n^2 + n - 1$ and $b = n^2 - n - 1$. Hence:
$$
N ~=~ 2^{6 k} \left(2^{6 k + 1} - 1\right) ~=~ 2^{6 k} \left(a^3 + b^3 + 1\right) ~=~ \left(2^{2 k} a\right)^3 + \left(2^{2 k} b\right)^3 + \left(2^{2 k}\right)^3 ,
$$
which is a sum of 3 cubes of natural numbers and so is also a sum of 5 cubes of natural numbers (by completing the sum by $0$'s). \\[1mm]
\underline{2\textsuperscript{nd} case:} (if $p = 6 k + 5$ for some $k \in \N$) \\[1mm]
In this case, we have:
$$
N ~=~ 2^{p - 1} \left(2^p - 1\right) ~=~ 2^{6 k + 4} \left(2^{6 k + 5} - 1\right) ~=~ 2^{6 k + 3} \left(2^{6 k + 6} - 2\right) ~=~ 2^{6 k + 3} \left(64 \cdot 2^{6 k} - 2\right) .
$$
Since $64 = 3^3 + 3^3 + 2^3 + 2$, it follows that:
\begin{eqnarray}
N & = & 2^{6 k + 3} \Big((3^3 + 3^3 + 2^3 + 2) 2^{6 k} - 2\Big) \notag \\
& = & (2^{2 k + 1})^3 \Big((3 \cdot 2^{2 k})^3 + (3 \cdot 2^{2 k})^3 + (2 \cdot 2^{2 k})^3 + (2 \cdot 2^{6 k} - 2)\Big) \label{eq8}
\end{eqnarray}
Next, taking $n = 2^k$ in \eqref{eq7}, we get $2 \cdot 2^{6 k} - 2 = a^3 + b^3$ (with $a = n^2 + n - 1$ and $b = n^2 - n - 1$), which when reported in \eqref{eq8} gives:
\begin{eqnarray*}
N & = & (2^{2 k + 1})^3 \Big((3 \cdot 2^{2 k})^3 + (3 \cdot 2^{2 k})^3 + (2 \cdot 2^{2 k})^3 + a^3 + b^3\Big) \\
& = & (3 \cdot 2^{4 k + 1})^3 + (3 \cdot 2^{4 k + 1})^3 + (2 \cdot 2^{4 k + 1})^3 + (2^{2 k + 1} a)^3 + (2^{2 k + 1} b)^3 ,
\end{eqnarray*}
which is (as required) a sum of 5 cubes of natural numbers. This achieves the proof.
\end{proof}
\noindent\textbf{Remark:} It is easy to show that apart from the number $28 = 1^3 + 3^3$, none of even perfect numbers can be written as the sum of 2 cubes of natural numbers. But, up to the moment, we don't find any example of an even perfect number (other than $6$) which cannot be written as the sum of 3 cubes of natural numbers. For the first even perfect numbers, we have the following representations as a sum of 3 cubes:
$$
\begin{array}{rclcl}
28 & = & 2^2 (2^3 - 1) & = & 0^3 + 1^3 + 3^3 \\
496 & = & 2^4 (2^5 - 1) & = & 4^3 + 6^3 + 6^3 \\
8128 & = & 2^6 (2^7 - 1) & = & 4^3 + 4^3 + 20^3 \\
33550336 & = & 2^{12} (2^{13} - 1) & = & 16^3 + 176^3 + 304^3 \\
8589869056 & = & 2^{16} (2^{17} - 1) & = & 720^3 + 1336^3 + 1800^3 .
\end{array}
$$
So, we conjecture the following:
\begin{conj}
Any even perfect number, other than $6$, can be written as the sum of 3 cubes of natural numbers.
\end{conj}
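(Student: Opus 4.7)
The plan begins with a reduction that is visible inside the proof of the preceding Theorem: in the case $p = 6k + 1$, that proof already exhibits $N$ as a sum of exactly three cubes of natural numbers (the padding by $0$'s is optional), and for $p = 3$ one has $N = 28 = 0^3 + 1^3 + 3^3$. The conjecture is therefore reduced to the remaining case $p = 6 k + 5$, in which the Theorem's argument produces a sum of five cubes. All the real work is to sharpen this to three cubes.

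For that case, I would parametrize $N = 2^{p-1}(2^p - 1)$ by $n = 2^k$ to obtain $N = 16 n^6 (32 n^6 - 1) = 512 n^{12} - 16 n^6$, and search for a polynomial identity
$$
512 n^{12} - 16 n^6 ~=~ F(n)^3 + G(n)^3 + H(n)^3 , \qquad F, G, H \in \Z[n] ,
$$
analogous to \eqref{eq7}, with $F(2^k), G(2^k), H(2^k) \geq 0$ for every relevant $k$. An equivalent reformulation, obtained by writing $N = (2^{2k+1})^3 (2^{6k+6} - 2)$, is to look for a three-cube upgrade of \eqref{eq7} representing $2 m^6 - 2$ with $m = 2^{k+1}$, then scale by $2^{2k+1}$. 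The two known decompositions in this case, $496 = 4^3 + 6^3 + 6^3$ and $8\,589\,869\,056 = 720^3 + 1336^3 + 1800^3$, are the first place to hunt for a pattern.

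The main obstacle is that such a uniform polynomial identity appears not to exist. Comparing coefficients of $n^{12}$ forces $A^3 + B^3 + C^3 = 512$ for the leading coefficients $A, B, C$ of $F, G, H$; the positivity of the outputs at $n = 2^k$ for $k \to \infty$ forces $A, B, C \geq 0$, and the only such solution is $(8, 0, 0)$. Writing $F = 8 n^4 + f_3 n^3 + f_2 n^2 + f_1 n + f_0$ with $\deg G, \deg H \leq 3$ and matching coefficients at $n^{11}, n^{10}, n^9, \ldots$ rapidly collapses the system: for instance, if $f_1 = 0$ the matching forces $G, H$ to have degree $\leq 2$ and demands $g_2^3 + h_2^3 = -16$ with $g_2, h_2 \geq 0$, which is impossible; taking $f_1 \neq 0$ runs into analogous congruence obstructions at the next divisibility step. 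Hence no single polynomial identity seems to settle the case $p = 6k + 5$.

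To get around this, one would have to break $p = 6 k + 5$ further into residue classes $k \pmod m$ for some small $m$ and use a separate identity in each subcase, or else argue non-constructively. A piece of encouraging evidence for the latter is that every even perfect $N \neq 6$ satisfies $N \equiv 1 \pmod{9}$, so the only known local obstruction to being a sum of three (nonnegative) cubes is absent. But converting this into a proof is beyond current methods, because representability as a sum of three nonnegative cubes is itself a notoriously hard Diophantine problem; this is presumably why the conjecture remains open.
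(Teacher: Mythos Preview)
The statement you are addressing is labeled \emph{Conjecture} in the paper, and the paper offers no proof of it whatsoever; it only records the numerical evidence
\[
28 = 0^3 + 1^3 + 3^3,\quad 496 = 4^3 + 6^3 + 6^3,\quad 8128 = 4^3 + 4^3 + 20^3,\quad \ldots
\]
and then states the conjecture. So there is no ``paper's own proof'' to compare against.

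Your proposal is not a proof either, and you say so explicitly. What you have written is a correct reduction together with an honest obstruction analysis: the cases $p = 3$ and $p \equiv 1 \pmod 6$ are indeed already handled as sums of three cubes inside the proof of the Theorem, so the conjecture is equivalent to the case $p \equiv 5 \pmod 6$; your computation $N = 512 n^{12} - 16 n^6$ with $n = 2^k$ is right; and your leading-coefficient argument showing that a single polynomial identity $F^3 + G^3 + H^3 = 512 n^{12} - 16 n^6$ with eventually nonnegative values is forced into $(A,B,C) = (8,0,0)$ up to permutation, after which the lower coefficients collapse, is a reasonable heuristic for why the approach that worked for the Theorem does not immediately extend. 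The observation that $N \equiv 1 \pmod 9$ removes the standard local obstruction is also correct and worth recording.

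In short: there is no gap to name beyond the one you yourself flag --- the problem is open, the paper leaves it open, and your write-up is a discussion of difficulty rather than a proof.
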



\begin{thebibliography}{9}
\bibitem{dic}
{\sc L. E. Dickson}. History of the theory of numbers, Vol I, Washington: Carnegie Institute of Washington. 1919–1923; reprint ed., Chelsea Publ. Co., New York (1952).
\bibitem{guy}
{\sc R. K. Guy}. Unsolved problems in number theory, Chap B, third ed., Problem Books in Mathematics, Springer-Verlag, New York, 2004.
\bibitem{ras}
{\sc R. Rashed}. Ibn al-Haytham et les nombres parfaits, {\it Historia Mathematica}, {\bf 16} (1989), p. 343-352.
\bibitem{san}
{\sc J. S\'andor \& B. Crstici}. Handbook of number theory. II, Chap 1, Kluwer Academic Publishers, Dordrecht, 2004.
\bibitem{sie}
{\sc W. Sierpi\'nski}. Elementary theory of numbers, Chap IV, Panstowowe Wydawnictwo Naukowe, Warsaw, 1964.
\end{thebibliography}
\end{document}